\newtheorem{theorem}{Theorem}
\newtheorem{corollary}{Corollary}
\theoremstyle{definition}
\newtheorem{definition}{Definition}
\newtheorem*{Definition}{Definition}
\newtheorem*{remark}{Remark}
\newtheorem{proposition}{Proposition}
\title{$k$-type entropy of $\mathbb{Z}^d$ actions}
\author{Anshid Aboobacker, Sharan Gopal}
\begin{document}

\begin{center}

\large{\textbf{$k$-type entropy of $\mathbb{Z}^d$-actions}}

\small{Anshid Aboobacker, Sharan Gopal}

\textit{Department of Mathematics, BITS Pilani Hyderabad Campus}

anshidaboobackerk@gmail.com, sharan@hyderabad.bits-pilani.ac.in
\end{center}

\section*{Abstract}
We introduce the concept of \(k\)-type entropy for dynamical systems generated by \(\mathbb{Z}^d\)-actions on compact metric spaces. We investigate its fundamental properties and establish connections with classical entropy and other \(k\)-type dynamical notions. The $k$-type entropy of some $\mathbb{Z}^2$-actions on a two dimensional torus is also calculated.

\noindent\textbf{Keywords:} entropy, $k$-type entropy, group actions

\noindent\textbf{2020 MSC:} 37B05, 37C85


\section{Introduction}

A dynamical system $(X,T)$ is a compact metric space $X$ together with a group action $T: G\times X \rightarrow X$.  In our paper, we study about the dynamical systems with $G=\mathbb{Z}^d$ with $d$ being a positive integer.

To study eventual behaviours of orbits in a dynamical system, when $d=1$, we generally take the direction as $n$ tends to the positive infinity. In the case of $\mathbb{Z}^d$-actions, Oprocha in his 2007 paper \cite{oprocha2007chain}, gave the notion of $k$-type order on $\mathbb{Z}^d$, where $k\in \{1,2,\dots,2^d\}$. He introduced the notions of $k$-type limit sets, $k$-type limit prolongation sets and $k$-type transitivity for $\mathbb{Z}^d$-actions. Shah and Das \cite{shah2015note,shah2015different} further developed on this to define $k$-type periodic points, $k$-type sensitivity, $k$-type Devaney chaos, $k$-type Li Yorke pairs, etc. They also studied about preservation of systems under conjugacies and about induced systems on hyperspaces.

Developing this further, we in our earlier paper \cite{aboobacker2025ktype} have defined and studied $k$-type proximal pairs, $k$-type asymptotic pairs, $k$-type Li Yorke Sensitivity, $k$-type Li Yorke pairs and $k$-type Li-Yorke chaos and various relations between them. We also showed that all these notions are preserved under conjugacies and looked into how these notions work in the induced $\mathbb{Z}^d$-actions. In this paper, we define $k$-type entropy for $\mathbb{Z}^d$-actions and study its various properties.

In the next section, we give the definitions and related results of topological entropy of $\mathbb{Z}$-actions as given in \cite{brin2002introduction}. We also mention some basic facts and also fix notations for $k$-type notions of $\mathbb{Z}^d$-actions in this section. Section 3 contains the definitions of $k$-type entropy and some results. In the final section, we calculate the entropy of certain $\mathbb{Z}^2$-actions defined by two commuting hyperbolic matrices on a two dimensional torus.

\section{Preliminaries}
As mentioned above, we will first give the definitions and some results for entropy of $\mathbb{Z}$-actions; we follow \cite{brin2002introduction} for all these. In this section, $X$ denotes a compact metric space with metric $\rho$, $f: X \to X$ a homeomorphism on $X$ and $\mathbb{Z}^+$ the set of positive integers. Note that $f$ defines a $\mathbb{Z}$-action: $(n,x) \mapsto f^n(x)$ for every $n \in \mathbb{Z}$ and $x \in X$. However, the definitions and most of the results given in this section for $\mathbb{Z}$-actions hold good even if $f$ is a (non-invertible) continuous function, in which case the action is by the semigroup of non-negative integers.

For $n \in \mathbb{N}$, the \textit{metric}
\[
\rho_n(x, y) = \max_{0 \leq k \leq n-1} \rho(f^k(x), f^k(y))
\]
measures the maximum separation between the first $n$ iterates of $x$ and $y$ for any $n\in \mathbb{Z}^+$.

An $(n, \epsilon)$-covering of $X$, for $n \in \mathbb{Z}^+$ and $\epsilon > 0$,  is a collection of sets whose union is $X$, and the $\rho_n$-diameter of each of them is less than $\epsilon$. Let $cov(n, \epsilon, f)$ denote the minimum of the cardinalities of all $(n, \epsilon)$-coverings of $X$. Since $X$ is compact, $cov(n, \epsilon, f)$ is well defined.

The topological entropy of \(f\), denoted by \(h(f)\), is defined as:
\[h(f) = \lim_{\epsilon \to 0^+} \limsup_{n \to \infty} \frac{1}{n} \log(\mathrm{cov}(n, \epsilon, f)).\]
This topological invariant measures the ``complexity" of the orbit structure of $f$ in the sense that it measures the exponential growth rate of the number of essentially different orbit segments of length $n$. The entropy can also be expressed equivalently in terms of $sep(n,\epsilon,f)$ and $span(n,\epsilon, f)$ as described below.

Fix $n \in \mathbb{Z}^+$ and $\epsilon > 0$. A set $E \subset X$ is called $(n, \epsilon)$-separated if for any $x, y \in E$ with $x \neq y$, we have $\rho_n(x, y) \ge \epsilon$. In other words, any two distinct points in $E$ must have their orbits diverge by at least $\epsilon$ within the first $n$ iterations. Let $sep(n, \epsilon, f)$ denote the maximum of the cardinalities of all $(n, \epsilon)$-separated sets. A set $E \subset X$ is called $(n, \epsilon)$-spanning if for any $x \in X$, there exists $y \in E$ such that $\rho_n(x, y) < \epsilon$. This means that for any point in $X$, we can find a point in $E$ whose orbit stays within $\epsilon$ of the orbit of $x$ for the first $n$ iterations. Let $span(n, \epsilon, f)$ denote the minimum of the cardinalities of all $(n, \epsilon)$-spanning sets. Again since $X$ is compact, $sep(n,\epsilon,f)$ and $span(n,\epsilon, f)$ exist.

It can be shown that $ \mathrm{cov}(n, 2\epsilon, f) \leq \mathrm{span}(n, \epsilon, f) \leq \mathrm{sep}(n, \epsilon, f) \leq \mathrm{cov}(n, \epsilon, f)$ for any $n\in \mathbb{Z}^+$ and $\epsilon>0$. Hence 
\[\begin{aligned}
h(f) &= \lim_{\epsilon\to 0^+}\limsup_{n\to\infty}\frac{1}{n}\log\big(\mathrm{sep}(n,\epsilon,f)\big)\\
     &= \lim_{\epsilon\to 0^+}\limsup_{n\to\infty}\frac{1}{n}\log\big(\mathrm{span}(n,\epsilon,f)\big).
\end{aligned}
\]

The topological entropy of a homeomorphism $f: X \to X$ is independent of the metric chosen to generate the topology of $X$ and it is preserved under topological conjugacy. Several structural properties hold: for iterates, $h(f^m) = m \cdot h(f)$ when $m \in \mathbb{N}$, and if $f$ is invertible, then $h(f^{-1}) = h(f)$; so in general $h(f^m) = |m| \cdot h(f)$ for $m \in \mathbb{Z}$. Moreover, if $X$ is the union of finitely many closed forward $f$-invariant subsets, then $h(f)$ equals the maximum of the entropies restricted to these subsets. For product systems, entropy behaves additively, i.e., $h(f \times g) = h(f) + h(g)$, and for factor maps, entropy decreases, meaning if $g$ is a factor of $f$, then $h(f) \geq h(g)$. 

A $\mathbb{Z}^d$-action on $X$ is a continuous map $T: \mathbb{Z}^d \times X \to X$, i.e., $T^0(x) = x$ and $T^{m_1+m_2}(x) = T^{m_1}(T^{m_2}(x))$ for all $x \in X$ and $m_1, m_2 \in \mathbb{Z}^d$, where $T^m(x)$ denotes $T(m,x)$. 
We use the following definitions as given by Oprocha \cite{oprocha2007chain}, Shah and Das \cite{shah2015note, shah2015different}. For $k \in \{1, 2, 3, \dots, 2^d\}$, let $k^b$ represent $k-1$ in the $d$-positional binary system, i.e., 
$k-1 = \sum_{i=1}^d k_i^b 2^{i-1}$, where $k^b \in \{0, 1\}^d$. For $x, y \in \mathbb{Z}^d$, we say $x >^k y$ if $(-1)^{k_i^b} x_i > (-1)^{k_i^b} y_i \quad \forall i$, where $x = (x_1, \dots, x_d)$ and $y = (y_1, \dots, y_d)$. By $x \geq ^k y$, we mean $x >^k y$ or $x=y$. We also use the notation $x <^k y$ and $x \leq^k y$ to mean that $y >^k x$ and $y \geq^k x$ respectively. Finally, whenever we write $m \geq^k 0$, we mean that $m \in \mathbb{Z}^d$ and $0$ is the identity element of the group $\mathbb{Z}^d$; this abuse of notation by replacing $(0,0,...,0) \in \mathbb{Z}^d$ by $0$ doesn't lead to any confusion, as the context makes it clear.


\section{$k$-type Topological Entropy}
In this section, we will define $k$-type entropy of $\mathbb{Z}^d$ actions, analogous to the entropy of $\mathbb{Z}$ actions. We start with the definition of $k$-type metric followed by the definitions of  $cov(n,k,\epsilon,T)$, $span(n,k,\epsilon,T)$ and $sep(n,k,\epsilon,T)$, and finally the $k$-type entropy. Throughout this section, $n$ and $d$ denote positive integers, $k \in \{1,2,...,2^d\}$ and $\epsilon > 0$. Also, $T$ is a $\mathbb{Z}^d$-action on a compact metric space $X$.

\begin{definition}
The $k$-type metric $\rho_{n,k}$ is given by
\[ \rho_{n,k}(x, y) = \max_{\substack{||m|| < n \\  m\ge^k 0}} \rho(T^m(x), T^m(y)), \]
where $\|m\| = \max\limits_{1 \leq i \leq n} |m_i|$ and $|m_i|$ is the absolute value of $m_i$.
\end{definition}

\begin{definition}
    A collection of subsets of $X$ is called an $(n, k, \epsilon)$-covering of $X$ with respect to $T$ if the $\rho_{n,k}$ diameter of each set in the collection is less than $\epsilon$ and $X$ equals the union of all these sets. Let $cov(n,k,\epsilon,T)$ denote the minimum of cardinalities of all $(n,k,\epsilon)$-coverings.
\end{definition}

\begin{definition}
    An $(n, k, \epsilon)$-spanning set $E \subset X$ is a set such that for every $x \in X$, there exists $y \in E$ with $\rho_{n,k}(x, y) < \epsilon$. The minimum of cardinalities of all such sets is denoted by $span(n, k, \epsilon, T)$.
\end{definition}

\begin{definition}
     An $(n, k, \epsilon)$-separated set $E \subset X$ is a set such that for any distinct $x, y \in E$, we have $\rho_{n,k}(x, y) \ge \epsilon$. The maximum of cardinalities of all such sets is denoted by $sep(n, k, \epsilon, T)$.
\end{definition}
Note that the compactness of $X$ ensures that all these three numbers $cov(n,k,\epsilon,T)$, $span(n,k,\epsilon,T)$ and $sep(n,k,\epsilon,T)$ exist. Now, we can define the $k$-type topological entropy. 

\begin{definition}
    The \emph{$k$--type topological entropy} of $T$ (hereafter called $k$-type entropy)  is defined as
    \[
        h_k(T) = \lim_{\epsilon \to 0^+} \; \limsup_{n \to \infty} \frac{1}{n} \log \operatorname{cov}(n, k, \epsilon, T).
    \]
\end{definition}

\begin{remark}
    For each fixed $\epsilon > 0$, the quantity
    \[
        \limsup_{n \to \infty} \frac{1}{n} \log \operatorname{cov}(n, k, \epsilon, T)
    \]
    is non-increasing as $\epsilon \to 0^+$. Consequently, the limit in the definition exists, and $h_k(T)$ is well-defined for every $\mathbb{Z}^d$--action $T$.
\end{remark}

\begin{proposition}
For a $\mathbb{Z}^d$--action $T$ on a compact metric space $(X,\rho)$, for every $n\in\mathbb{N}$, $k \in \{1,\dots,2^d\}$, and $\epsilon>0$, we have
\[
\operatorname{cov}(n,k,2\epsilon,T)\;\le\;\operatorname{span}(n,k,\epsilon,T)\;\le\;\operatorname{sep}(n,k,\epsilon,T)\;\le\;\operatorname{cov}(n,k,\epsilon,T).
\]
\end{proposition}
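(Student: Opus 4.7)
The plan is to adapt the classical proof for $\mathbb{Z}$-actions, the key point being that $\rho_{n,k}$ is itself a genuine metric on $X$, so that all triangle inequality arguments transfer verbatim. Indeed, since $0\ge^k 0$, the index $m=0$ appears in the maximum defining $\rho_{n,k}$, giving $\rho_{n,k}(x,y)\ge\rho(x,y)$, which yields positive definiteness; symmetry is inherited from $\rho$, and the triangle inequality follows because a maximum of pseudo-metrics is again a pseudo-metric. I will establish the three inequalities in turn.

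For $\operatorname{cov}(n,k,2\epsilon,T)\le\operatorname{span}(n,k,\epsilon,T)$, I would take an $(n,k,\epsilon)$-spanning set $E$ of minimum cardinality and consider the collection of open $\rho_{n,k}$-balls $\{B_{\rho_{n,k}}(y,\epsilon):y\in E\}$. By the spanning property this covers $X$, and by the triangle inequality in $\rho_{n,k}$ each ball has $\rho_{n,k}$-diameter at most $2\epsilon$, so it is an $(n,k,2\epsilon)$-covering of cardinality $|E|$.

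For $\operatorname{span}(n,k,\epsilon,T)\le\operatorname{sep}(n,k,\epsilon,T)$, I would choose an $(n,k,\epsilon)$-separated set $E$ of maximum cardinality (this exists because $\operatorname{sep}(n,k,\epsilon,T)$ is finite by compactness). Maximality forces that for any $x\in X\setminus E$, adjoining $x$ breaks separation, so some $y\in E$ satisfies $\rho_{n,k}(x,y)<\epsilon$; hence $E$ is also $(n,k,\epsilon)$-spanning. For $\operatorname{sep}(n,k,\epsilon,T)\le\operatorname{cov}(n,k,\epsilon,T)$, I would fix a minimum $(n,k,\epsilon)$-covering $\mathcal{U}$ and any $(n,k,\epsilon)$-separated set $E$; since each member of $\mathcal{U}$ has $\rho_{n,k}$-diameter strictly less than $\epsilon$, it contains at most one point of $E$, so $|E|\le|\mathcal{U}|$.

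There is no serious obstacle here; the only subtlety is to verify that $\rho_{n,k}$ really is a metric (not merely a pseudo-metric), and this is immediate from the presence of $m=0$ in the indexing set $\{m\in\mathbb{Z}^d:\|m\|<n,\;m\ge^k 0\}$. Once this is observed, the proof is a direct translation of the classical argument, with the one-dimensional Bowen metric $\rho_n$ replaced by $\rho_{n,k}$ throughout.
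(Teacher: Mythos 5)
Your proposal is correct and follows essentially the same route as the paper: spanning balls of radius $\epsilon$ give a $2\epsilon$-cover, a maximal separated set is spanning, and each member of an $\epsilon$-cover meets a separated set in at most one point. Your preliminary observation that $\rho_{n,k}$ is a genuine metric (because $m=0$ lies in the index set) is a nice explicit justification that the paper leaves implicit, but it does not change the argument.
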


\begin{proof}

Let $E$ be an $(n,k,\epsilon)$-spanning set with cardinality equal to $span(n,k,\epsilon, T)$. For every $x\in X$ there exists $y\in E$ with $\rho_{n,k}(x,y)<\epsilon$. Hence the $\epsilon$-balls $\{B_{\rho_{n,k}}(y,\epsilon):y\in E\}$ cover $X$ and thus is an $(n,k,2\epsilon)$-cover of $X$, since each ball has diameter at most $2\epsilon$. Hence $cov(n,k,2\epsilon,T) \le span(n,k,\epsilon,T)$.
 
Let $F$ be an $(n,k,\epsilon)$-separated set with cardinality equal to $sep(n,k,\epsilon, T)$. By maximality, for every $x\in X$ there exists $y\in F$ such that $\rho_{n,k}(x,y)<\epsilon$, otherwise $F\cup \{x\}$ will be a larger separated set. Hence $F$ is an $(n,k,\epsilon)$-spanning set. Therefore, $span(n,k,\epsilon,T) \le sep(n,k,\epsilon,T)$.

Let $\mathcal{U}$ be an $(n,k,\epsilon)$-cover of $X$ with cardinality equal to $cov(n,k,\epsilon,T)$. If $E$ is an $(n,k,\epsilon)$-separated set, then each member of  $\mathcal{U}$ can contain at most one point of $E$, since otherwise two distinct points of $E$ would lie in the same set of diameter less than $\epsilon$. Thus $sep(n,k,\epsilon,T) \le cov(n,k,\epsilon,T)$.
\end{proof}

\begin{remark}
    $h_k(T)$ can be equivalently defined using spanning sets or separated sets based on the $k$-type metric and the above inequality. \begin{align*}
h_k(T) &= \lim_{\epsilon \to 0^+} \limsup_{n \to \infty} 
          \frac{1}{n} \log \, \mathrm{span}(n, k, \epsilon, T) \\
       &= \lim_{\epsilon \to 0^+} \limsup_{n \to \infty} 
          \frac{1}{n} \log \, \mathrm{sep}(n, k, \epsilon, T).
\end{align*}
\end{remark}

\begin{remark}
If $d=1$, then $f(x)=T(1,x)$ for every $x\in X$ is a homeomorphism. It follows that $h_1(T)=h(f)$ because $k-1=0$ and thus $n_1<^kn_2$ if and only if $n_1<n_2$ for any $n_1,n_2 \in \mathbb{Z}$. Similarly, we have $h_2(T)=h(f^{-1})$. However $h(f)=h(f^{-1})$ implies that $h_1(T)=h_2(T)=h(f)$.

The core idea is that these definitions measure the exponential growth rate of distinguishable orbits under the $\mathbb{Z}^d$-action $T$, taking the partial order $\le^k$ into account.
\end{remark}

With the above definition of $k$-type topological entropy, we have the following results. On some occasions, we work with two different metrics on the same space and in such cases, we denote $cov(n,k,\epsilon,T)$, $span(n,k,\epsilon,T)$, $sep(n,k,\epsilon,T)$ and $h_k(T)$ by $cov(n,k,\epsilon,T,\rho)$, $span(n,k,\epsilon,T,\rho)$, $sep(n,k,\epsilon,T,\rho)$ and $h_k(T, \rho)$ respectively, where $\rho$ is the metric with respect to which these numbers are calculated.

\begin{theorem}\label{MetricIndependence}
Let $T : \mathbb{Z}^d \times X \to X$ be a $\mathbb{Z}^d$-action on a compact metric space $X$. 
If $\rho$ and $\rho'$ are equivalent metrics on $X$, then 
\[
h_k(T,\rho) \;=\; h_k(T,\rho') \qquad \text{for all } k \in \{1,2,\dots,2^d\}.
\]
\end{theorem}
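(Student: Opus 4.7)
The plan is to reduce the statement to the classical fact that two equivalent metrics on a compact space are \emph{uniformly} equivalent, and then show that this uniform equivalence lifts, with no dependence on $n$, from $\rho,\rho'$ to the $k$-type metrics $\rho_{n,k}$ and $\rho'_{n,k}$. Once that is done, spanning (or covering) numbers for the two metrics will sandwich one another, and the equality of the entropies will drop out after taking $\limsup_{n\to\infty}$ and $\lim_{\epsilon\to 0^+}$.

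Concretely, first I would invoke compactness of $X$: since $\rho$ and $\rho'$ generate the same topology on a compact Hausdorff space, the identity map $(X,\rho)\to(X,\rho')$ is uniformly continuous in both directions. Thus, for every $\epsilon>0$ there is a $\delta>0$ such that $\rho(x,y)<\delta$ implies $\rho'(x,y)<\epsilon$, and symmetrically. Next, since $\rho_{n,k}(x,y)=\max_{\|m\|<n,\, m\ge^k 0}\rho(T^m x,T^m y)$ is a finite maximum of $\rho$-distances evaluated along iterates, the implication $\rho(u,v)<\delta\Rightarrow\rho'(u,v)<\epsilon$ applied pointwise to each pair $(T^m x,T^m y)$ gives
\[
\rho_{n,k}(x,y)<\delta \;\Longrightarrow\; \rho'_{n,k}(x,y)<\epsilon,
\]
with the same $\delta$ for every $n$. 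This uniformity in $n$ is the point that makes the whole argument work.

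From here I would compare spanning numbers. Any $(n,k,\delta)$-spanning set for $T$ with respect to $\rho$ is automatically $(n,k,\epsilon)$-spanning with respect to $\rho'$, so $\operatorname{span}(n,k,\epsilon,T,\rho')\le\operatorname{span}(n,k,\delta,T,\rho)$. Taking $\tfrac{1}{n}\log$ and then $\limsup_{n\to\infty}$ preserves this inequality; sending $\delta\to 0^+$ (which forces $\epsilon\to 0^+$ too, by the uniform equivalence) gives $h_k(T,\rho')\le h_k(T,\rho)$. The identical argument with the roles of $\rho$ and $\rho'$ swapped yields the reverse inequality, establishing equality.

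There is no real obstacle here; the only point to be careful about is that the $\delta=\delta(\epsilon)$ furnished by uniform continuity must be chosen \emph{before} letting $n\to\infty$, so that a single $\delta$ works simultaneously for all the iterates appearing in the definition of $\rho_{n,k}$. Using the equivalent formulation of $h_k(T)$ via spanning sets (available from the preceding proposition and remark) keeps the comparison cleanest; one could equally well argue with covers or separated sets, yielding the same conclusion.
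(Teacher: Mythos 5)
Your proposal is correct and is essentially the paper's own argument: the paper also uses compactness to obtain a modulus of equivalence (defining $\delta(\epsilon)=\sup\{\rho'(x,y):\rho(x,y)<\epsilon\}$, which is the same content as your uniform continuity of the identity map), transfers spanning sets between the two $k$-type metrics uniformly in $n$, and concludes by symmetry after taking limits. No substantive difference.
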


\begin{proof}
For $\epsilon > 0$, define $\delta(\epsilon) \;=\; \sup\{\rho'(x,y) : \rho(x,y) < \epsilon\}.$
Since $\rho$ and $\rho'$ are equivalent on the compact space $X$, we have $\delta(\epsilon) \to 0$ as $\epsilon \to 0$.  
Suppose $E \subseteq X$ is an $(n,k,\epsilon)$-spanning set for $T$ with respect to $\rho$. Then for every $x \in X$ there exists $y \in E$ such that $\rho_{n,k}(x,y) < \epsilon$. By definition of $\delta(\epsilon)$, this implies $\rho'_{n,k}(x,y) < \delta(\epsilon)$. Hence, $E$ is also an $(n,k,\delta(\epsilon))$-spanning set for $T$ with respect to $\rho'$. Consequently,
$\operatorname{span}(n,k,\delta(\epsilon),T,\rho') \;\le\; \operatorname{span}(n,k,\epsilon,T,\rho).$
This gives us, 
$h_k(T,\rho') \;\le\; h_k(T,\rho).$
The reverse inequality follows similarly by interchanging $\rho$ and $\rho'$. Thus,
$h_k(T,\rho) \;=\; h_k(T,\rho')$
for all $k \in \{1,2,\dots,2^d\}$.
\end{proof}

\begin{Definition}
Let $T_{1} : \mathbb{Z}^{d} \times X \to X$ and $T_{2} : \mathbb{Z}^{d} \times Y \to Y$ be $\mathbb{Z}^{d}$--actions on the compact metric spaces $X$ and $Y$, respectively.  
We say that $(X,T_{1})$ and $(Y,T_{2})$ are \emph{topologically conjugate} if there exists a homeomorphism $\pi : X \to Y$ such that
$\pi \circ T_{1}^{n} \;=\; T_{2}^{n} \circ \pi,  \text{for all } n \in \mathbb{Z}^{d}$.
In this case, the map $\pi$ is called a \emph{conjugacy} between $(X,T_{1})$ and $(Y,T_{2})$.
If $\pi$ is only a continuous surjection, then $(Y,T_{2})$ is called a \emph{factor} of $(X,T_{1})$, and $\pi$ is a \emph{factor map}.
\end{Definition}

\begin{theorem}[Conjugacy Invariance]
Let $T_1 : \mathbb{Z}^d \times X \to X$ and $T_2 : \mathbb{Z}^d \times Y \to Y$ be $\mathbb{Z}^d$-actions on compact metric spaces $(X,\rho_X)$ and $(Y,\rho_Y)$, respectively. If $T_1$ and $T_2$ are topologically conjugate, then \[h_k(T_1) \;=\; h_k(T_2) \qquad \text{for all } k \in \{1,2,\dots,2^d\}.\]
\end{theorem}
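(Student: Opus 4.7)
The plan is to reduce the conjugacy-invariance statement to the metric-independence theorem (Theorem \ref{MetricIndependence}) by pulling back the metric on $Y$ to $X$ through the conjugacy $\pi$, thereby turning the cross-system comparison into a comparison of two metrics on the same space.

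First I would define $\rho'_X(x_1,x_2) := \rho_Y(\pi(x_1),\pi(x_2))$ for $x_1,x_2 \in X$. Because $\pi$ is a homeomorphism between compact metric spaces, both $\pi$ and $\pi^{-1}$ are uniformly continuous, which immediately yields that $\rho'_X$ is a metric on $X$ equivalent to $\rho_X$ (i.e.\ both generate the topology of $X$, and on the compact space $X$ the usual $\delta$--$\epsilon$ modulus argument gives the symmetric control needed in Theorem \ref{MetricIndependence}). Applying that theorem we obtain
\[
h_k(T_1,\rho_X) \;=\; h_k(T_1,\rho'_X).
\]

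Next I would exploit the conjugacy relation $\pi \circ T_1^m = T_2^m \circ \pi$ for every $m \in \mathbb{Z}^d$. For any $x_1,x_2 \in X$ and $m \in \mathbb{Z}^d$,
\[
\rho'_X(T_1^m(x_1),T_1^m(x_2)) \;=\; \rho_Y(\pi(T_1^m(x_1)),\pi(T_1^m(x_2))) \;=\; \rho_Y(T_2^m(\pi(x_1)),T_2^m(\pi(x_2))).
\]
Taking the maximum over $\{m : \|m\|<n,\ m\ge^k 0\}$, which is the same index set used to define both $k$-type metrics, gives $(\rho'_X)_{n,k}(x_1,x_2) = (\rho_Y)_{n,k}(\pi(x_1),\pi(x_2))$. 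Consequently $\pi$ is an isometry between the $k$-type metrics $(\rho'_X)_{n,k}$ on $X$ and $(\rho_Y)_{n,k}$ on $Y$, so it maps $(n,k,\epsilon)$-spanning sets to $(n,k,\epsilon)$-spanning sets bijectively (and similarly for separated or covering sets). Hence
\[
\operatorname{span}(n,k,\epsilon,T_1,\rho'_X) \;=\; \operatorname{span}(n,k,\epsilon,T_2,\rho_Y)
\]
for every $n$ and $\epsilon$, and the equivalent definition of $h_k$ via spanning sets (noted in the remark after the proposition) yields $h_k(T_1,\rho'_X)=h_k(T_2,\rho_Y)$. Combined with the previous equality this gives $h_k(T_1)=h_k(T_2)$.

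The only mildly delicate point is checking that $\rho'_X$ and $\rho_X$ really satisfy the hypothesis of Theorem \ref{MetricIndependence} (this is where compactness, rather than just continuity of $\pi$, is used); once that is in hand the rest is a clean transport-of-structure argument and no further estimates are required, since the conjugacy produces an exact isometry rather than merely a uniform comparison at the level of $k$-type metrics.
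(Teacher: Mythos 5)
Your proof is correct and follows essentially the same route as the paper: the paper transports the metric in the opposite direction (defining $\rho'_Y(y_1,y_2)=\rho_X(h^{-1}(y_1),h^{-1}(y_2))$ on $Y$ and noting $h^{-1}$ is an isometry of the $k$-type metrics), while you pull $\rho_Y$ back to $X$, but both arguments combine the same two ingredients — an exact isometry at the level of $k$-type metrics coming from the conjugacy, plus Theorem \ref{MetricIndependence} applied to the transported metric. This is a mirror-image of the paper's proof, and it is complete.
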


\begin{proof}
Since $T_1$ and $T_2$ are topologically conjugate, there exists a homeomorphism $h : X \to Y$ such that $h \circ T_1^{m} \;=\; T_2^{m} \circ h$ for all  $m \in \mathbb{Z}^d.$
Define a new metric $\rho'_Y$ on $Y$ by $\rho'_Y(y_1,y_2) \;=\; \rho_X\!\big(h^{-1}(y_1), h^{-1}(y_2)\big)$.
Because $h$ is a homeomorphism, $\rho'_Y$ is equivalent to $\rho_Y$.
For the induced metrics, we compute
\begin{align*}
\rho'_{Y,n,k}(y_1,y_2) 
&= \max_{m\geq^k 0, \,\|m\| < n} \rho'_Y\big(T_2^{m}(y_1), T_2^{m}(y_2)\big) \\
&= \max_{m\geq^k 0, \,\|m\| < n} \rho_X\!\Big(h^{-1}(T_2^{m}(y_1)), \, h^{-1}(T_2^{m}(y_2))\Big) \\
&= \max_{m\geq^k 0, \,\|m\| < n} \rho_X\!\Big(T_1^{m}(h^{-1}(y_1)), \, T_1^{m}(h^{-1}(y_2))\Big) \\
&= \rho_{X,n,k}\!\big(h^{-1}(y_1), h^{-1}(y_2)\big).
\end{align*}
Thus, $h^{-1}$ is an isometry between $(Y,\rho'_{Y,n,k})$ and $(X,\rho_{X,n,k})$. Consequently, $\operatorname{span}(n,k,\epsilon,T_2,\rho'_Y) \;=\; \operatorname{span}(n,k,\epsilon,T_1,\rho_X)$ and thus, $h_k(T_2,\rho'_Y) \;=\; h_k(T_1,\rho_X)$.
By Theorem \ref{MetricIndependence}, $h_k(T_2,\rho_Y) = h_k(T_2,\rho'_Y)$ and hence, $h_k(T_2) \;=\; h_k(T_1)$.
\end{proof}

\begin{theorem}\label{EntropyFactorMap}
Let $T_{1} : \mathbb{Z}^{d} \times X \to X$ and $T_{2} : \mathbb{Z}^{d} \times Y \to Y$ be $\mathbb{Z}^{d}$--actions on compact metric spaces $X$ and $Y$, respectively and let $k\in\{1,\dots,2^d\}$. If $(Y,T_{2})$ is a factor of $(X,T_{1})$ via a continuous surjection $\pi: X \to Y$, then $h_k(T_2) \leq h_k(T_1)$.
\end{theorem}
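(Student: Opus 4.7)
The plan is to mirror the classical factor-map argument from the $\mathbb{Z}$-action setting, working with $(n,k,\epsilon)$-spanning sets, since we have the characterisation of $h_k$ via $\mathrm{span}(n,k,\epsilon,T)$ from the earlier remark. The two ingredients we need are (i) the uniform continuity of $\pi : X \to Y$, which follows from compactness of $X$, and (ii) the intertwining identity $\pi \circ T_1^{m} = T_2^{m} \circ \pi$ for all $m \in \mathbb{Z}^d$, which is built into the definition of a factor map.

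Concretely, given $\epsilon > 0$, I would first pick $\delta = \delta(\epsilon) > 0$ such that $\rho_X(x_1,x_2) < \delta$ implies $\rho_Y(\pi(x_1),\pi(x_2)) < \epsilon$, by uniform continuity of $\pi$. Then I would take an $(n,k,\delta)$-spanning set $E \subset X$ for $T_1$ of minimum cardinality and show that $\pi(E)$ is an $(n,k,\epsilon)$-spanning set for $T_2$. For this, given $y \in Y$, surjectivity of $\pi$ provides $x \in X$ with $\pi(x) = y$; the spanning property supplies $x' \in E$ with $\rho_{X,n,k}(x,x') < \delta$, so for every $m \ge^k 0$ with $\|m\| < n$,
\[
\rho_Y\!\bigl(T_2^{m}(y), T_2^{m}(\pi(x'))\bigr) \;=\; \rho_Y\!\bigl(\pi(T_1^{m}(x)), \pi(T_1^{m}(x'))\bigr) \;<\; \epsilon,
\]
using the intertwining identity and the choice of $\delta$. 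Taking the max over such $m$ yields $\rho_{Y,n,k}(y,\pi(x')) < \epsilon$, and hence $\mathrm{span}(n,k,\epsilon,T_2) \le |\pi(E)| \le |E| = \mathrm{span}(n,k,\delta,T_1)$.

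Taking $\limsup_{n\to\infty} \frac{1}{n}\log$ of both sides preserves the inequality, and then letting $\epsilon \to 0^{+}$ forces $\delta \to 0^{+}$, so both sides converge to the respective $k$-type entropies and we obtain $h_k(T_2) \le h_k(T_1)$. I do not anticipate a serious obstacle: the only subtle point is ensuring that the index set $\{m : m \ge^k 0,\ \|m\| < n\}$ is literally the same for both actions so that the intertwining identity transfers the $k$-type metric from $X$ to $Y$ cleanly; this is automatic because the order $\ge^k$ is defined purely on $\mathbb{Z}^d$ and is independent of the action.
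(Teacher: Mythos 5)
Your proof is correct, and it is the natural dual of the paper's argument: the same two ingredients (uniform continuity of $\pi$ on the compact space $X$ and the intertwining $\pi\circ T_1^{m}=T_2^{m}\circ\pi$) drive both proofs, but you push an $(n,k,\delta)$-spanning set for $T_1$ forward through $\pi$ to get $\operatorname{span}(n,k,\epsilon,T_2)\le\operatorname{span}(n,k,\delta,T_1)$, whereas the paper lifts an $(n,k,\epsilon)$-separated set for $T_2$ back to $X$ (choosing one preimage per point) and argues by contradiction that the lift is $(n,k,\delta)$-separated, giving $\operatorname{sep}(n,k,\epsilon,T_2)\le\operatorname{sep}(n,k,\delta,T_1)$. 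Both counting inequalities yield the entropy inequality through the remark that $h_k$ can be computed from either $\operatorname{span}$ or $\operatorname{sep}$; your spanning-set version is slightly more direct (no contradiction step, and surjectivity is used only to pick a preimage of the point being approximated), while the paper's separated-set version avoids having to observe that $\pi(E)$ might collapse points (which in your argument is harmless since it only shrinks the cardinality). One small cosmetic point: in the final limiting step you do not actually need $\delta\to 0$ as $\epsilon\to 0$; for each fixed $\delta>0$ one already has $\limsup_n\frac1n\log\operatorname{span}(n,k,\delta,T_1)\le h_k(T_1)$ by monotonicity in $\delta$, which is all the conclusion requires.
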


\begin{proof}
Since $\pi$ is uniformly continuous, for every $\varepsilon>0$ there exists $\delta>0$ such that $\rho_X(x,x')<\delta$ implies $\rho_Y(\pi(x),\pi(x'))<\varepsilon$. Let $S_Y \subset Y$ be an $(n,k,\varepsilon)$--separated set for $T_2$. For each $y \in S_Y$ choose $x_y \in \pi^{-1}(y)$ and set $S_X := \{x_y : y \in S_Y\} \subset X$.

Suppose $y \neq y'$ in $S_Y$ and $\rho_{n,k}(x_y, x_{y'}) < \delta$. Then $\rho(T_1^m(x_y), T_1^m(x_{y'})) < \delta$ for all $m$ with $\|m\|<n$ and $m \ge^k 0$, which by uniform continuity of $\pi$ gives $\rho(T_2^m(y), T_2^m(y')) = \rho(\pi(T_1^m(x_y)), \pi(T_1^m(x_{y'}))) < \varepsilon$ for all $m$. Hence $\rho_{n,k}(y,y') < \varepsilon$, contradicting that $S_Y$ is $(n,k,\varepsilon)$--separated. Thus $S_X$ is $(n,k,\delta)$--separated for $T_1$, and therefore $\mathrm{sep}(n,k,\varepsilon,T_2) \le \mathrm{sep}(n,k,\delta,T_1)$. Taking limits as in the definition of entropy yields $h_k(T_2) \le h_k(T_1)$.
\end{proof}

\begin{remark}
    A $\mathbb{Z}^d$--action $T:\mathbb{Z}^d \times X \to X$ is said to be an \emph{isometry} if, for every $m \in \mathbb{Z}^d$, the map $T(m,-):X \to X$ is an isometry. 
    In this case, the $k$--type topological entropy is zero for every $k$.
\end{remark}

If $Y$ is a subspace of $X$ such that $T^m(y)\in Y$ for every $y\in Y$ and every $m\in\mathbb{Z}^d$, then $Y$ is called a $T$-invariant subspace and further, if $Y$ is also closed, then $(Y,T)$ is called a subsystem of $(X,T)$, in which case $(Y,T)$ itself can be considered as a dynamical system in its own respect. The following theorem shows that the $k$-type entropy of $(X,T)$ is equal to the maximum of the $k$-type entropies of subsystems whose union is equal to $X$.

\begin{theorem}\label{EntropyUnionsInvariantSubsets}
Let $T : \mathbb{Z}^d \times X \to X$ be a $\mathbb{Z}^d$-action on a compact metric space $X$.  
Suppose $A_1, \dots, A_\ell$ are closed (not necessarily disjoint) $T$-invariant subsets such that
\[
X = \bigcup_{i=1}^\ell A_i.
\]
Then
\[
h_k(T) = \max_{1 \leq i \leq \ell} h_k\!\left(T|_{A_i}\right)
\qquad \text{for all } k \in \{1,2,\dots,2^d\}.
\]

\end{theorem}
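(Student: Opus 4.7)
My plan is to prove the equality by establishing the two inequalities separately. The direction $\max_{i} h_k(T|_{A_i}) \le h_k(T)$ is the easier one: since each $A_i$ is closed and $T$-invariant, the $k$-type metric for $T|_{A_i}$ is literally the restriction of $\rho_{n,k}$ to $A_i$, so every $(n,k,\epsilon)$-separated set for $T|_{A_i}$ sitting inside $A_i$ is automatically $(n,k,\epsilon)$-separated for $T$ in $X$. Hence $\operatorname{sep}(n,k,\epsilon,T|_{A_i}) \le \operatorname{sep}(n,k,\epsilon,T)$ for each $i$, and passing to the limsup in $n$ followed by $\epsilon \to 0^+$ yields $h_k(T|_{A_i}) \le h_k(T)$, hence the same bound for the maximum.

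For the reverse inequality I would work with spanning sets. For each $i$, choose an $(n,k,\epsilon)$-spanning set $E_i \subseteq A_i$ realizing $\operatorname{span}(n,k,\epsilon,T|_{A_i})$. Because $X = \bigcup_i A_i$, every $x \in X$ lies in some $A_i$ and is therefore within $\rho_{n,k}$-distance less than $\epsilon$ of a point of $E_i$. Thus $E := \bigcup_{i=1}^{\ell} E_i$ is an $(n,k,\epsilon)$-spanning set for $T$, giving
\[
\operatorname{span}(n,k,\epsilon,T) \;\le\; \sum_{i=1}^{\ell} \operatorname{span}(n,k,\epsilon,T|_{A_i}) \;\le\; \ell \cdot \max_{1 \le i \le \ell} \operatorname{span}(n,k,\epsilon,T|_{A_i}).
\]
After applying $\tfrac{1}{n}\log(\cdot)$, the additive contribution $(\log \ell)/n$ vanishes as $n \to \infty$.

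Two limit interchanges remain, both depending on the finiteness of $\{A_1,\dots,A_\ell\}$. First, for a fixed $\epsilon$, a pigeonhole argument on which index attains the maximum along a limsup-realizing subsequence gives
\[
\limsup_{n\to\infty} \tfrac{1}{n} \log \max_{i} \operatorname{span}(n,k,\epsilon,T|_{A_i}) \;=\; \max_{i} \limsup_{n\to\infty} \tfrac{1}{n} \log \operatorname{span}(n,k,\epsilon,T|_{A_i}).
\]
Second, letting $\epsilon \to 0^+$, each of the $\epsilon$-quantities $\limsup_{n} \tfrac{1}{n}\log \operatorname{span}(n,k,\epsilon,T|_{A_i})$ is monotone non-decreasing in $1/\epsilon$, and the maximum over a finite family of such monotone quantities converges to the maximum of the limits, yielding $h_k(T) \le \max_{i} h_k(T|_{A_i})$.

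The main obstacle is not deep but organisational: both limit interchanges fail in general for infinite covers, so the careful use of the hypothesis that there are only finitely many $A_i$ must be recorded at exactly these two points. Everything else is a direct unpacking of definitions combined with the spanning/separated/covering inequality from the proposition just proved.
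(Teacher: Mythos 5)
Your proposal is correct and follows essentially the same route as the paper: the easy inequality via separated sets restricted to each $A_i$, and the reverse inequality by taking the union of spanning sets, bounding $\operatorname{span}(n,k,\epsilon,T)$ by $\ell\cdot\max_i \operatorname{span}(n,k,\epsilon,T|_{A_i})$, and letting the $(\log\ell)/n$ term vanish. In fact you are slightly more careful than the paper in recording why the two limit interchanges (limsup of a finite max, and $\epsilon\to 0^+$ of a finite max) are legitimate.
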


\begin{proof}
We first prove the inequality 
\[
h_k(T) \;\geq\; \max_{1 \leq i \leq \ell} h_k(T|_{A_i}).
\]
Indeed, if $E \subset A_i$ is an $(n,k,\varepsilon)$-separated set for $T|_{A_i}$, then $E$ is also $(n,k,\varepsilon)$-separated for $T$ on $X$. Hence
$\operatorname{sep}(n,k,\varepsilon,T|_{A_i}) \;\leq\; \operatorname{sep}(n,k,\varepsilon,T).$
Passing to logarithms, dividing by $n$, and taking $\limsup_{n \to \infty}$ followed by $\varepsilon \to 0$, we obtain
$h_k(T|_{A_i}) \;\leq\; h_k(T).$
Taking the maximum over $i$ gives the desired inequality.

Conversely, we show that
\[
h_k(T) \;\leq\; \max_{1 \leq i \leq \ell} h_k(T|_{A_i}).
\]
Let $\operatorname{span}_i(n,k,\varepsilon,T)$ denote the minimum of cardinalities of all $(n,k,\varepsilon)$-spanning set for $T|_{A_i}$.  
If $F_i$ is such a spanning set for $A_i$, then
$F = \bigcup_{i=1}^\ell F_i$
is an $(n,k,\varepsilon)$-spanning set for $X$. Thus
\[
\operatorname{span}(n,k,\varepsilon,T) \;\leq\; \sum_{i=1}^\ell \operatorname{span}_i(n,k,\varepsilon,T)
\;\leq\; \ell \cdot \max_{1 \leq i \leq \ell} \operatorname{span}_i(n,k,\varepsilon,T).
\]
It follows that
\begin{align*}
h_k(T) 
&= \lim_{\varepsilon \to 0} \limsup_{n \to \infty} \frac{1}{n} 
   \log \operatorname{span}(n,k,\varepsilon,T) \\
&\leq \lim_{\varepsilon \to 0} \limsup_{n \to \infty} \frac{1}{n}
   \log \!\bigg( \ell \cdot \max_{1 \leq i \leq \ell} \operatorname{span}_i(n,k,\varepsilon,T) \bigg) \\
&= \max_{1 \leq i \leq \ell} \lim_{\varepsilon \to 0} \limsup_{n \to \infty}
   \frac{1}{n} \log \operatorname{span}_i(n,k,\varepsilon,T) \\
&= \max_{1 \leq i \leq \ell} h_k(T|_{A_i}).
\end{align*}
Combining the two inequalities yields the claimed equality.\end{proof}

\begin{corollary}
    Let $T : \mathbb{Z}^d \times X \to X$ be a $\mathbb{Z}^d$-action on a compact metric space $X$. If $A \subseteq X$ is a closed $T$-invariant subset, then $h_k(T|_A) \leq h_k(T).$
\end{corollary}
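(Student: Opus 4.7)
The plan is short since this corollary is essentially a specialization of Theorem \ref{EntropyUnionsInvariantSubsets}. I would simply apply that theorem to the two-set decomposition $X = A \cup X$, where $A$ is closed and $T$-invariant by hypothesis, and $X$ itself is trivially closed and $T$-invariant. The theorem then yields
\[
h_k(T) \;=\; \max\bigl\{h_k(T|_A),\, h_k(T|_X)\bigr\} \;=\; \max\bigl\{h_k(T|_A),\, h_k(T)\bigr\},
\]
from which $h_k(T|_A) \le h_k(T)$ follows immediately.

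If one prefers a self-contained direct argument, it is exactly the first half of the proof of Theorem \ref{EntropyUnionsInvariantSubsets}. The observation is that any $(n,k,\varepsilon)$-separated set $E \subseteq A$ for $T|_A$, computed with respect to the restriction of $\rho$ to $A$, is also $(n,k,\varepsilon)$-separated for $T$ on $X$, because $T$-invariance of $A$ guarantees that $T^m(x) \in A$ for every $x \in E$ and every admissible $m$, so the two versions of $\rho_{n,k}$ agree on pairs from $E$. Consequently
\[
\operatorname{sep}(n,k,\varepsilon,T|_A) \;\le\; \operatorname{sep}(n,k,\varepsilon,T),
\]
and taking $\tfrac{1}{n}\log$, passing to $\limsup_{n\to\infty}$, and then letting $\varepsilon \to 0^+$ gives $h_k(T|_A) \le h_k(T)$.

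There is essentially no technical obstacle here; the only point worth stating carefully is that the $k$-type metric $\rho_{n,k}$ on the subsystem $(A,T|_A)$ coincides with the restriction of the ambient $\rho_{n,k}$ to $A \times A$, which in turn relies on $A$ being $T$-invariant. Given that, the inequality drops out directly from the monotonicity already used in the proof of Theorem \ref{EntropyUnionsInvariantSubsets}.
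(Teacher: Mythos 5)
Your proposal is correct and matches the paper's proof exactly: the paper also applies Theorem \ref{EntropyUnionsInvariantSubsets} with $A_1 = A$ and $A_2 = X$ to get $h_k(T) = \max\bigl(h_k(T|_A), h_k(T)\bigr)$, hence the inequality. Your supplementary direct argument via separated sets is a sound restatement of the first half of that theorem's proof, but it is not needed.
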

\begin{proof}
    If $A \subseteq X$ is closed and $T$-invariant, apply the above theorem with $A_1 = A$ and $A_2 = X$, which gives $h_k(T) = \max\big(h_k(T|_A),\, h_k(T)\big),$ hence $h_k(T|_A) \leq h_k(T)$.
\end{proof}

The following theorem gives the $k$-type entropy of ``product" of two $\mathbb{Z}^d$-actions in terms of $k$-type entropy of individual actions. Given two $\mathbb{Z}^d$-actions $T_1$ and $T_2$ on two compact metric spaces $X$ and $Y$ respectively, we define the $\mathbb{Z}^d$-action $T_1\times T_2$ on $X\times Y$ as $T_1\times T_2 (m,(x,y)) = (T_1^m(x),T_2^m(y))$ for every $m\in \mathbb{Z}^d$ and $(x,y)\in X\times Y$. We calculate the $k$-type entropy of $T_1\times T_2$ with respect to the metric $\rho$ on $X\times Y$ defined as $\rho((x_1,y_1),(x_2,y_2))= \max \{ \rho_X(x_1,x_2),\rho_Y(y_1,y_2)\}$, where $\rho_X$ and $\rho_Y$ are the metrics on $X$ and $Y$ respectively. We use the same notations as given in this paragraph for the next theorem and also its proof.

\begin{theorem}\label{ProductFormula}
For every $k\in\{1,\dots,2^d\}$, the $k$--type topological entropy is additive:
\[h_k(T_1\times T_2) = h_k(T_1) + h_k(T_2).\]

\end{theorem}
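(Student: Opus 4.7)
My plan is to prove the equality via two matching inequalities, after first identifying the $k$-type metric on the product space.

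The first step would be to verify the key metric identity: for every $n$, $k$ and every $(x_1,y_1),(x_2,y_2)\in X\times Y$,
\[
\rho_{n,k}\big((x_1,y_1),(x_2,y_2)\big)=\max\bigl\{\rho_{X,n,k}(x_1,x_2),\,\rho_{Y,n,k}(y_1,y_2)\bigr\}.
\]
This follows from expanding the definition of $\rho_{n,k}$ for $T_1\times T_2$, substituting $\rho=\max\{\rho_X,\rho_Y\}$, and interchanging the $\max$ over indices $m$ with the $\max$ over the two coordinates.

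For the upper bound $h_k(T_1\times T_2)\le h_k(T_1)+h_k(T_2)$, I would take minimum $(n,k,\epsilon)$-coverings $\mathcal{U}$ of $X$ and $\mathcal{V}$ of $Y$; by the identity above, each rectangle $U\times V$ has $\rho_{n,k}$-diameter less than $\epsilon$, so $\{U\times V:U\in\mathcal{U},V\in\mathcal{V}\}$ is an $(n,k,\epsilon)$-cover of $X\times Y$, giving $\operatorname{cov}(n,k,\epsilon,T_1\times T_2)\le\operatorname{cov}(n,k,\epsilon,T_1)\cdot\operatorname{cov}(n,k,\epsilon,T_2)$. Taking $\tfrac{1}{n}\log$, then $\limsup_n$ (using $\limsup(a_n+b_n)\le\limsup a_n+\limsup b_n$), and finally $\epsilon\to 0^+$ would yield the upper bound. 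For the reverse inequality I would take maximum $(n,k,\epsilon)$-separated sets $F_X\subset X$ and $F_Y\subset Y$; the metric identity makes $F_X\times F_Y$ separated for $T_1\times T_2$ (any two distinct product points differ in at least one coordinate, where the $\rho_{X,n,k}$ or $\rho_{Y,n,k}$ distance is already $\ge\epsilon$), so
\[
\operatorname{sep}(n,k,\epsilon,T_1\times T_2)\ge\operatorname{sep}(n,k,\epsilon,T_1)\cdot\operatorname{sep}(n,k,\epsilon,T_2).
\]

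The \emph{main obstacle} will lie in converting this multiplicative lower bound on $\operatorname{sep}$ into the additive lower bound on $h_k$, because the elementary inequality $\limsup_n(a_n+b_n)\ge\limsup_n a_n+\limsup_n b_n$ fails in general. The cleanest way forward would be to verify that the $\limsup$ in the definition of $h_k$ is in fact a genuine limit, by establishing a Fekete-type subadditivity for $\log\operatorname{cov}(n,k,\epsilon,T)$ along the index box $\{m\ge^k 0:\|m\|<n\}$ (possibly up to a dimension-dependent factor that disappears after dividing by $n$); once the limit exists, $\lim(a_n+b_n)=\lim a_n+\lim b_n$ splits immediately. The degenerate case in which one of the factor entropies is infinite can be disposed of separately by applying Theorem \ref{EntropyFactorMap} to the coordinate projections $X\times Y\to X$ and $X\times Y\to Y$, which yields $\max\{h_k(T_1),h_k(T_2)\}\le h_k(T_1\times T_2)$. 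Combining the upper bound and this sharpened lower bound would complete the proof.
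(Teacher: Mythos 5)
Your overall route is the same as the paper's: the max-metric identity on $X\times Y$, products of covers/spanning sets for the inequality $h_k(T_1\times T_2)\le h_k(T_1)+h_k(T_2)$, and products of separated sets for the multiplicative lower bound on $\operatorname{sep}$. Where you genuinely depart from the paper is in refusing to pass directly from $\operatorname{sep}(n,k,\epsilon,T_1\times T_2)\ge\operatorname{sep}(n,k,\epsilon,T_1)\cdot\operatorname{sep}(n,k,\epsilon,T_2)$ to the additive lower bound on entropy: you are right that $\limsup_n(a_n+b_n)\ge\limsup_n a_n+\limsup_n b_n$ is false in general, and the paper's proof simply ``applies $\limsup$'' at exactly this point without comment. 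Spotting this is to your credit.

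However, your proposed repair is where your own argument has a genuine gap. The Fekete-type subadditivity of $n\mapsto\log\operatorname{cov}(n,k,\epsilon,T)$, which in the $d=1$ case follows because the interval $\{0,\dots,n+m-1\}$ splits into an interval of length $n$ and a translate of one of length $m$, does not carry over routinely: for $d\ge 2$ the $k$-type index box of side $n+m$ decomposes into roughly $2^d$ rectangles (or into many translates of a side-$m$ box), so the natural estimates have the shape $b_{2n}\le 2^d\,b_n$ rather than $b_{n+m}\le b_n+b_m+C$, and with the paper's normalization by $n$ (not $n^d$) such inequalities do not force $\frac1n\log\operatorname{cov}(n,k,\epsilon,T)$ to converge. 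Note also that this quantity can be $+\infty$ (e.g.\ the full $\mathbb{Z}^2$-shift has $\operatorname{cov}$ growing like $|A|^{n^2}$), so any limit-existence argument must accommodate that. Your fallback via Theorem \ref{EntropyFactorMap} only yields $\max\{h_k(T_1),h_k(T_2)\}\le h_k(T_1\times T_2)$, which, as you say, disposes of the infinite case but not the finite one. So the lower bound is not yet proved in your write-up, precisely at the obstacle you identified; a complete argument needs either a genuine proof that the $\limsup$ is a limit in this $k$-type setting or a different mechanism (for instance an argument along a subsequence realizing the product's $\limsup$, or a $\liminf$-based comparison). The same criticism applies to the paper's own proof of this direction.
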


\begin{proof}

Note that $k$-type metric on $X\times Y$ is \[
\rho_{n,k}\big((x_1,y_1),(x_2,y_2)\big)
=\max\{\rho_{X,n,k}(x_1,x_2),\,\rho_{Y,n,k}(y_1,y_2)\}.
\]

\medskip\noindent
Let $E_1\subset X$ and $E_2\subset Y$ be $(n,k,\varepsilon)$--spanning sets for $T_1$ and $T_2$, respectively. For any $(x,y)\in X\times Y$ choose $x'\in E_1$, $y'\in E_2$ with
$\rho_{X,n,k}(x,x')<\varepsilon$ and $\rho_{Y,n,k}(y,y')<\varepsilon$. Then
\[
\rho_{n,k}((x,y),(x',y'))=\max\{\rho_{X,n,k}(x,x'),\rho_{Y,n,k}(y,y')\}<\varepsilon,
\]
so $E_1\times E_2$ is an $(n,k,\varepsilon)$--spanning set for $T_1\times T_2$. Hence
\[
\operatorname{span}(n,k,\varepsilon,T_1\times T_2)\le 
\operatorname{span}(n,k,\varepsilon,T_1)\cdot\operatorname{span}(n,k,\varepsilon,T_2).
\]
Passing to logarithms, dividing by $n$, taking $\limsup_{n\to\infty}$ and letting $\varepsilon\to 0^+$ yields
\[
h_k(T_1\times T_2)\le h_k(T_1)+h_k(T_2).
\]

\medskip\noindent  
Conversely, let $F_1\subset X$ and $F_2\subset Y$ be $(n,k,\varepsilon)$--separated sets for $T_1$ and $T_2$, respectively. For distinct $(x,y),(x',y')\in F_1\times F_2$ either $x\neq x'$ or $y\neq y'$, and consequently
\[
\rho_{n,k}((x,y),(x',y'))=\max\{\rho_{X,n,k}(x,x'),\rho_{Y,n,k}(y,y')\}\ge\varepsilon.
\]
Thus $F_1\times F_2$ is $(n,k,\varepsilon)$--separated for $T_1\times T_2$, and
\[
\operatorname{sep}(n,k,\varepsilon,T_1\times T_2)\ge
\operatorname{sep}(n,k,\varepsilon,T_1)\cdot\operatorname{sep}(n,k,\varepsilon,T_2).
\]
Taking logarithms, dividing by $n$, applying $\limsup_{n\to\infty}$ and letting $\varepsilon\to 0^+$ yields
\[
h_k(T_1\times T_2)\ge h_k(T_1)+h_k(T_2).
\]

\noindent Combining the two inequalities gives the desired equality.
\end{proof}

Let $T$ be a $\mathbb{Z}^d$-action on $X$, and let $r \in \mathbb{Z}^d$.  We define the \emph{$r^{th}$ iterate} $T^r$ by  $T^r(m,x) \;=\; T^{\,m \star r}(x),$ for every $m \in \mathbb{Z}^d$ and $x\in X,$ 
where $m \star r$ denotes the coordinate-wise product, i.e., if  
$m = (m_1,\dots,m_d)$ and $r = (r_1,\dots,r_d)$, then  
$m \star r = (m_1 r_1, \dots, m_d r_d).$ 

For $i\in\{1,2,\dots,d\}$, let $e_i\in\mathbb{Z}^d$ denote the $i$-th standard basis vector (with $1$ in the $i$-th coordinate and $0$ elsewhere). 
$T^{e_i}$ can be considered as a $\mathbb{Z}$-action also by defining $T^{e_i}(l,x)=T^{le_i}(x)$ for every $l\in\mathbb{Z}$ and $x\in X$. While calculating the $k$-type entropy of $T^{e_i}$ as a $\mathbb{Z}^d$-action, we consider the iterates $T^{m\star e_i}$, where $m\geq^k0$ and $||m||<n$ to find the $k$-type metric $\rho_{n,k}$. However, the set of iterates $\{ T^{m\star e_i} | m\geq^k0, ||m||<n  \}$ is same as either $\{ T^{le_i} | o\leq l < n \}$ or $\{ T^{le_i} | -n< l \leq 0\}$ depending on  $k$. The latter two sets of iterates determine the $\rho_n$ metric for $T^{e_i} $ and $T^{-e_i} $ as $\mathbb{Z}$-actions respectively.
Hence it follows that the $k$-type entropy of $T^{e_i} $ as a $\mathbb{Z}^d$-action for any $k\in\{1,2,\dots,2^d\}$ is same as $h(T^{e_i})$ or $h(T^{-e_i})$. However, $h(T^{e_i})=h(T^{-e_i})$. Hence $h_k(T^{e_i}) = h(T^{e_i})$ and thus $h_k(T^{le_i}) = h(T^{le_i})$ for every $l\in\mathbb{Z}$, where $T^{e_i}$ and $T^{le_i}$ are considered as $\mathbb{Z}^d$-actions on the left hand side and as $\mathbb{Z}$-actions on the right hand side. Further, it is well known that $h(T^{le_i}) = |l|\cdot h(T^{e_i})$ and hence we have $h_k(T^{le_i}) = |l|\cdot h_k(T^{e_i})$ for the $\mathbb{Z}^d$-action $T^{e_i}$.


\begin{theorem}\label{EntropyIterates}
For any $k\in\{1,2,\dots,2^d\}$,
\[
h_k(T^r) \;\geq\; \max\limits_{1\leq i\leq n}\{|r_i| \cdot h_k(T^{e_i})\}.
\]
\end{theorem}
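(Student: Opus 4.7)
The plan is to establish the per-coordinate bound $h_k(T^r)\geq|r_i|\,h_k(T^{e_i})$ for each $i$ and then take the maximum. If $r_i=0$ the bound is trivial, so fix $i$ with $r_i\neq 0$ and set $\sigma_j:=(-1)^{k_j^b}\in\{-1,+1\}$ for each $j$. The paragraph preceding the theorem establishes $|r_i|\,h_k(T^{e_i})=h(T^{r_ie_i})$ for the $\mathbb{Z}$-action generated by the map $x\mapsto T^{r_ie_i}(x)$, and invariance of $\mathbb{Z}$-action topological entropy under inversion gives $h(T^{r_ie_i})=h(T^{\sigma_ir_ie_i})$. So it is enough to produce, for each $n$ and each $\varepsilon>0$, enough $(n,k,\cdot)$-separated points for $T^r$ by starting from an ordinary $(n-1,\varepsilon)$-separated set of maximum cardinality for the $\mathbb{Z}$-action generated by $T^{\sigma_ir_ie_i}$.

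The main obstacle is that the constraint $m\geq^k 0$ in the definition of $\rho_{n,k}$ rules out any vector with a zero coordinate except $m=0$ itself; in particular one cannot directly choose $m=l\,e_i$ to expose the $i$-th axis $\mathbb{Z}$-action sitting inside the $k$-type metric for $T^r$. I would handle this by introducing the base vector $m^{0}:=(\sigma_1,\dots,\sigma_d)$, which lies in the admissible index set with $\|m^{0}\|=1$, together with the fixed homeomorphism $\psi:=T^{m^{0}\star r}$. For each $l\in\{0,1,\dots,n-2\}$ the perturbation $m(l):=m^{0}+l\,\sigma_i\,e_i$ still satisfies $m(l)\geq^k 0$ and $\|m(l)\|=1+l<n$, and a short computation yields the factorization $T^{m(l)\star r}=\psi\circ T^{l\,\sigma_ir_ie_i}$.

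Starting from a maximum $(n-1,\varepsilon)$-separated set $F$ for the $\mathbb{Z}$-action generated by $T^{\sigma_ir_ie_i}$, I would use uniform continuity of $\psi^{-1}$ on the compact space $X$ to extract a modulus $\eta(\varepsilon)>0$ with $\eta(\varepsilon)\to 0^+$ as $\varepsilon\to 0^+$, such that $\rho(u,v)\geq\varepsilon$ forces $\rho(\psi u,\psi v)\geq\eta(\varepsilon)$. For distinct $x,y\in F$, the separating index $l^*\in\{0,\dots,n-2\}$ combined with the factorization above immediately yields $\rho_{n,k}^{T^r}(x,y)\geq\eta(\varepsilon)$, so $F$ is $(n,k,\eta(\varepsilon))$-separated for $T^r$ and $\operatorname{sep}(n,k,\eta(\varepsilon),T^r)\geq\operatorname{sep}(n-1,\varepsilon,T^{\sigma_ir_ie_i})$. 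Taking $\log$, dividing by $n$ (using $(n-1)/n\to 1$), passing to $\limsup_{n\to\infty}$, and then letting $\varepsilon\to 0^+$ (which sends $\eta(\varepsilon)\to 0^+$ and turns the left side into $h_k(T^r)$) gives $h_k(T^r)\geq h(T^{\sigma_ir_ie_i})=|r_i|\,h_k(T^{e_i})$. Maximizing over $i$ completes the proof.
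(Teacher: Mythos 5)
Your proof is correct, but it takes a genuinely different route from the paper's. The paper argues directly with spanning sets: it asserts that the iterates defining the $k$-type metric of $T^{r_ie_i}$, namely $\{m_ir_ie_i : m\ge^k 0,\ \|m\|<n\}$, form a subset of the iterates $\{m\star r : m\ge^k 0,\ \|m\|<n\}$ defining the $k$-type metric of $T^r$, so that any $(n,k,\epsilon)$-spanning set for $T^r$ is automatically one for $T^{r_ie_i}$, giving $\operatorname{span}(n,k,\epsilon,T^{r_ie_i})\le\operatorname{span}(n,k,\epsilon,T^r)$ with the same $\epsilon$ and no auxiliary map. You instead anchor at the admissible base vector $m^0=(\sigma_1,\dots,\sigma_d)$, factor $T^{m(l)\star r}=\psi\circ T^{l\sigma_ir_ie_i}$ with $\psi=T^{m^0\star r}$, and transfer separated sets through uniform continuity of $\psi^{-1}$, at the cost of an $\varepsilon\mapsto\eta(\varepsilon)$ change of scale and a shift from $n$ to $n-1$, both harmless in the limit (note you only need $\limsup_n \frac1n\log\operatorname{sep}(n,k,\eta(\varepsilon),T^r)\le h_k(T^r)$, which holds by monotonicity whether or not $\eta(\varepsilon)\to 0$). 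What your detour buys is precisely the point you flagged: for $d\ge 2$ the vector $le_i$ with $l\neq 0$ is not $\ge^k 0$, and when $r$ has two or more nonzero coordinates the inclusion invoked in the paper actually fails (e.g. $d=2$, $k=1$, $r=(1,1)$: the exponent $(1,0)$ lies in the left-hand set but not the right), so the paper's comparison of the two $k$-type metrics is not justified as written; your composition-with-$\psi$ argument supplies the missing step and still yields $h_k(T^r)\ge h(T^{\sigma_i r_i e_i})=|r_i|\,h_k(T^{e_i})$ for each $i$. Two cosmetic remarks: the case $r_i=0$ is indeed trivial, and (as in the paper's statement) the maximum should run over $1\le i\le d$, not $1\le i\le n$.
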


\begin{proof}
Let $A$ be a minimal spanning set for $T^r$.  
Then, for every $x \in X$, there exists $y \in A$ such that  
\[\rho_{n,k}(T^r(x),\, T^r(y)) = \max\limits_{m\geq^k 0, \,\|m\| < n}\{\rho(T^{m\star r}(x),\, T^{m\star r}(y)) \} < \epsilon.\]
Now, \begin{align*}
\rho_{n,k}\big(T^{r_i e_i}(x),\,T^{r_i e_i}(y)\big)
  &= \max_{\substack{m \geq^k 0 \\ \|m\| < n}}
     \left\{\rho\big(T^{m \star r_i e_i}(x),\, T^{m \star r_i e_i}(y)\big)\right\} \\[6pt]
  &= \max_{\substack{m \geq^k 0 \\ \|m\| < n}}
     \left\{\rho\big(T^{m_i r_i e_i}(x),\, T^{m_i r_i e_i}(y)\big)\right\}.
\end{align*}
Note that $m_i r_i e_i = m_i e_i\star r$, implying that \[\{m_i r_i e_i |  m \geq^k 0,  \|m\| < n\} \subset \{m\star r |  m \geq^k 0,  \|m\| < n\}. \]
Hence, for every $x \in X$, there exists $y \in A$ such that  $\rho_{n,k}(T^{r_i e_i}(x),\, T^{r_i e_i}(y)) < \epsilon$ for all $i \in \{1,2,\dots,d\}$.
Hence $A$ is also a spanning set for $T^{r_i e_i}$.  
Since a minimal spanning set for $T^{r_i e_i}$ can only be smaller, we have $\operatorname{span}(T^r) \geq \operatorname{span}(T^{r_i e_i})$ for all $i$.
Thus, $\operatorname{span}(T^r) \geq \max\limits_{1\leq i\leq n} \{\operatorname{span}(T^{r_i e_i})\}$.
Taking limits, we obtain $h_k(T^r) \geq \max\limits_{1\leq i\leq n}\{h_k(T^{r_i e_i})\} = \max\limits_{1\leq i\leq n}\{|r_i| \cdot h_k(T^{e_i})\}$.
\end{proof}

\section{$k$-type entropy of toral automorphisms}

A hyperbolic matrix refers to an integer matrix of determinant one, with dstinct real eigenvalues, each of them having absolute value not equal to one. In this section, we will calculate the $k$-type entropy of $\mathbb{Z}^2$-actions defined by two commuting hyperbolic matrices on a two-dimensional torus. We define a two-dimensional torus as $\mathbb{T}^2 = \mathbb{R}^2 / \mathbb{Z}^2$ and if $A$ is a hyperbolic matrix, then the map $f_A(x) = Ax$ is an automorphism, called a hyperbolic toral automorphism. It is well known that $h(f_A)=|\lambda|$, where $\lambda$ is the eigenvalue of $A$ with $|\lambda|>1$ (see \cite{brin2002introduction} for a proof). We call this eigenvalue $\lambda$ as the expanding value of $A$ and the other eigenvalue, which is $\frac{1}{\lambda}$, as its contracting eigenvalue.
 
Now, let $A$ and $B$ be two commuting hyperbolic matrices.
Then they admit common eigenvectors, say $v_1$ and $v_2$, where we assume that the expanding eigenvalues of $A$ and $B$, say $\lambda_A$ and $\lambda_B$ correspond to $v_1$, and the contracting eigenvalues $\lambda_A^{-1}$ and $ \lambda_B^{-1}$ correspond to $v_2$.

These define a natural $\mathbb{Z}^2$-action on the torus:
\[
T : \mathbb{Z}^2 \times \mathbb{T}^2 \to \mathbb{T}^2, 
\qquad T((m_1,m_2),x) = A^{m_1}B^{m_2}x.
\]
Our aim is to compute the $k$-type entropy of this action.
The proofs of our statements in this section i.e., Proposition \ref{prop:k14}, Proposition \ref{prop:k23} and Theorem \ref{thm:torus} are similar to the proof of Proposition 2.6.1 in \cite{brin2002introduction} that computes the topological entropy of a hyperbolic toral automorphism.

Let $\pi : \mathbb{R}^2 \to \mathbb{T}^2$ be the quotient map. 
For $x,y \in \mathbb{R}^2$, write $x-y = a_1 v_1 + a_2 v_2$ and define $\tilde{\rho}(x,y) = \max(|a_1|,|a_2|).$
This is a translation-invariant metric on $\mathbb{R}^2$, and it induces a metric $\rho$ on $\mathbb{T}^2$ via $\pi$. 
A $\tilde{\rho}$-ball of radius $\epsilon$ is a parallelogram with sides of length $2\epsilon$ parallel to $v_1,v_2$. 
The $\tilde\rho_{n,k}$-balls are also parallelograms, but with different side lengths as described in the following propositions.

\begin{proposition}\label{prop:k14}
For $k \in \{1,4\}$, a $\tilde\rho_{n,k}$-ball of radius $\epsilon$ is a parallelogram with side lengths 
\[
2\epsilon |\lambda_A|^{-(n-1)}|\lambda_B|^{-(n-1)} 
\quad\text{and}\quad 2\epsilon.
\]
\end{proposition}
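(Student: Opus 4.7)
The plan is to compute the $k$-type metric $\tilde\rho_{n,k}$ explicitly in the eigenbasis $\{v_1,v_2\}$ and then read off the shape of the ball. Working on the universal cover $\mathbb{R}^2$ is sufficient for small $\epsilon$, since $\pi$ is a local isometry between $(\mathbb{R}^2,\tilde\rho)$ and $(\mathbb{T}^2,\rho)$. For $x,y\in\mathbb{R}^2$ with $x-y=a_1v_1+a_2v_2$, since $A$ and $B$ are simultaneously diagonalised by $v_1,v_2$ with eigenvalues $(\lambda_A,\lambda_A^{-1})$ and $(\lambda_B,\lambda_B^{-1})$, I would compute
\[
T^{(m_1,m_2)}(x)-T^{(m_1,m_2)}(y)
= a_1\lambda_A^{m_1}\lambda_B^{m_2}\,v_1
+a_2\lambda_A^{-m_1}\lambda_B^{-m_2}\,v_2,
\]
so that
\[
\tilde\rho\bigl(T^{m}x,T^{m}y\bigr)
=\max\bigl(|a_1|\,|\lambda_A|^{m_1}|\lambda_B|^{m_2},\;|a_2|\,|\lambda_A|^{-m_1}|\lambda_B|^{-m_2}\bigr).
\]

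Next I would unpack the order $\geq^k$ in the two cases. For $k=1$ we have $k^b=(0,0)$, so the admissible indices are $m=(m_1,m_2)$ with $0\le m_1,m_2\le n-1$; for $k=4$ we have $k^b=(1,1)$, so the admissible indices are $-(n-1)\le m_1,m_2\le 0$. Since $|\lambda_A|,|\lambda_B|>1$, in the $k=1$ regime the first term is maximised at $m=(n-1,n-1)$ and the second at $m=(0,0)$, giving
\[
\tilde\rho_{n,1}(x,y)=\max\bigl(|a_1|\,|\lambda_A|^{n-1}|\lambda_B|^{n-1},\;|a_2|\bigr);
\]
for $k=4$ the roles of the two coordinates swap and
\[
\tilde\rho_{n,4}(x,y)=\max\bigl(|a_1|,\;|a_2|\,|\lambda_A|^{n-1}|\lambda_B|^{n-1}\bigr).
\]

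From these formulas the ball description is immediate. For $k=1$, the condition $\tilde\rho_{n,1}(x,y)<\epsilon$ is equivalent to $|a_1|<\epsilon|\lambda_A|^{-(n-1)}|\lambda_B|^{-(n-1)}$ together with $|a_2|<\epsilon$, which is exactly the parallelogram with sides $2\epsilon|\lambda_A|^{-(n-1)}|\lambda_B|^{-(n-1)}$ along $v_1$ and $2\epsilon$ along $v_2$; the $k=4$ case yields the same parallelogram with the roles of $v_1$ and $v_2$ interchanged, which has identical side lengths. Pushing down via $\pi$, which is an isometry on scales smaller than the injectivity radius, gives the result on $\mathbb{T}^2$.

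The argument is essentially bookkeeping once one writes everything in the eigenbasis; the only point requiring a small bit of care is the translation of the partial order $\geq^k$ into the correct sign conventions on $(m_1,m_2)$, and the observation that both $k=1$ and $k=4$ maximise the expression along the ``diagonal corners'' $(n-1,n-1)$ or $-(n-1,n-1)$, which is where the joint factor $|\lambda_A|^{n-1}|\lambda_B|^{n-1}$ comes from (as opposed to the cases $k=2,3$, treated in the next proposition, where only one of the two exponents blows up).
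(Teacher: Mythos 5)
Your proposal is correct and follows essentially the same route as the paper: diagonalise in the common eigenbasis, identify the admissible index set for $k=1$ (respectively $k=4$) as $0\le m_1,m_2\le n-1$ (respectively $-(n-1)\le m_1,m_2\le 0$), observe that the two terms are maximised at the opposite diagonal corners, and read off the parallelogram. The explicit remark that for $k=4$ the roles of $v_1,v_2$ swap but the side lengths are unchanged matches the paper's ``similar proof works for $k=4$'' and adds no gap.
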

\begin{proof}
Since we have, $x-y = a_1 v_1 + a_2 v_2,$ for some $a_1,a_2\in\mathbb R$, under the linear map $A^{m_1}B^{m_2}$ the coordinates scale as
\[
A^{m_1}B^{m_2}(x)-A^{m_1}B^{m_2}(y)
= (a_1 \lambda_A^{m_1}\lambda_B^{m_2})\,v_1 + (a_2 \lambda_A^{-m_1}\lambda_B^{-m_2})\,v_2.
\]
Hence, in the metric $\tilde\rho$,
\[
\tilde\rho\big(A^{m_1}B^{m_2}x,\,A^{m_1}B^{m_2}y\big)
= \max\big\{\,|a_1|\;|\lambda_A|^{m_1}|\lambda_B|^{m_2},\;
|a_2|\;|\lambda_A|^{-m_1}|\lambda_B|^{-m_2}\big\}.
\]
For $k=1$ the $k$-type metric is the maximum over $0\le m_1,m_2\le n-1$, so
\[
\tilde\rho_{n,k}(x,y)
= \max_{0\le m_1,m_2\le n-1}\max\big\{\,|a_1|\;|\lambda_A|^{m_1}|\lambda_B|^{m_2},\;
|a_2|\;|\lambda_A|^{-m_1}|\lambda_B|^{-m_2}\big\}.
\]
The first term is increasing in each $m_i$ and attains its maximum at $m_1=m_2=n-1$, while the second term is decreasing in each $m_i$ and attains its maximum at $m_1=m_2=0$. Therefore
\[
\tilde\rho_{n,k}(x,y)=\max\big\{\,|a_1|\;|\lambda_A|^{\,n-1}|\lambda_B|^{\,n-1},\;|a_2|\,\big\}.
\]
The condition $\tilde\rho_{n,k}(x,y)<\epsilon$ is thus equivalent to the two inequalities
\[
|a_1| < \epsilon\,|\lambda_A|^{-(n-1)}|\lambda_B|^{-(n-1)},\qquad
|a_2| < \epsilon.
\]
Thus, a $\tilde\rho_{n,k}$-ball of radius $\epsilon$ is a parallelogram with side lengths 
\[
2\epsilon |\lambda_A|^{-(n-1)}|\lambda_B|^{-(n-1)} 
\quad\text{and}\quad 2\epsilon.
\]
This proves the claim. Similar proof works for $k=4$, where the maximum is taken over the indices $-(n-1)\leq m_1,m_2\leq 0$.
\end{proof}

\begin{proposition}\label{prop:k23}
For $k \in \{2,3\}$, a $\tilde\rho_{n,k}$-ball of radius $\epsilon$ is a parallelogram 
with side lengths 
\[
2\epsilon |\lambda_A|^{-(n-1)} 
\quad\text{and}\quad 2\epsilon|\lambda_B|^{-(n-1)}.
\]
\end{proposition}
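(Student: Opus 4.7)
The plan is to mirror the proof of Proposition \ref{prop:k14} almost verbatim; only the index set over which the maximum defining $\tilde\rho_{n,k}$ is taken will differ, and this will shift where the two relevant suprema are attained.

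First I would set up the coordinates exactly as before: write $x - y = a_1 v_1 + a_2 v_2$, and use the fact that $v_1, v_2$ are common eigenvectors of $A$ and $B$ to obtain
\[
\tilde\rho\bigl(A^{m_1}B^{m_2}x,\,A^{m_1}B^{m_2}y\bigr) = \max\bigl\{|a_1|\,|\lambda_A|^{m_1}|\lambda_B|^{m_2},\;|a_2|\,|\lambda_A|^{-m_1}|\lambda_B|^{-m_2}\bigr\}.
\]
Next I would identify the relevant index sets. For $k=2$ one has $k^b=(1,0)$, so $m\ge^k 0$ together with $\|m\|<n$ gives $-(n-1)\le m_1\le 0$ and $0\le m_2\le n-1$; for $k=3$ one has $k^b=(0,1)$, producing the symmetric rectangle $0\le m_1\le n-1$, $-(n-1)\le m_2\le 0$.

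The main step is to compute, on these rectangles, the two maxima separately. Since $|\lambda_A|,|\lambda_B|>1$, the factor $|\lambda_A|^{m_1}|\lambda_B|^{m_2}$ is increasing in each of $m_1,m_2$, while $|\lambda_A|^{-m_1}|\lambda_B|^{-m_2}$ is decreasing in each; the two factors therefore attain their maxima at \emph{opposite} corners of the rectangle, so the two optimizations decouple. For $k=2$ the first factor peaks at $(m_1,m_2)=(0,n-1)$ with value $|\lambda_B|^{n-1}$, and the second at $(-(n-1),0)$ with value $|\lambda_A|^{n-1}$, giving
\[
\tilde\rho_{n,k}(x,y)=\max\bigl\{|a_1|\,|\lambda_B|^{n-1},\;|a_2|\,|\lambda_A|^{n-1}\bigr\}.
\]
The inequality $\tilde\rho_{n,k}(x,y)<\epsilon$ then splits into $|a_1|<\epsilon|\lambda_B|^{-(n-1)}$ and $|a_2|<\epsilon|\lambda_A|^{-(n-1)}$, which describes a parallelogram with the two claimed side lengths. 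The case $k=3$ is handled identically after swapping the roles of $m_1$ and $m_2$; the unordered pair of side lengths is unchanged.

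The argument is essentially routine, and the only real point to watch is the geometric contrast with Proposition \ref{prop:k14}. For $k\in\{1,4\}$ the two factors are monotone in the \emph{same} direction in each coordinate (both maximize at $(n-1,n-1)$ or both at $(0,0)$), so only one side of the ball shrinks, picking up the combined rate $|\lambda_A\lambda_B|^{n-1}$. For $k\in\{2,3\}$ the monotonicities are opposed in each coordinate, and this opposition is precisely what uncouples the two shrinking rates into $|\lambda_A|^{n-1}$ and $|\lambda_B|^{n-1}$ separately. Keeping this decoupling straight is the main source of possible error; once the right corners of the rectangle are identified, the rest is a direct substitution.
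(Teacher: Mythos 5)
Your proof is correct and follows essentially the same approach as the paper: decompose $x-y$ in the common eigenbasis, maximize the two scaling factors over the rectangle of admissible exponents, and read off the values at the opposite corners. The only cosmetic difference is that you keep signed exponents (so for $k=2$ the $v_1$-direction shrinks at rate $|\lambda_B|^{\,n-1}$ and the $v_2$-direction at $|\lambda_A|^{\,n-1}$, the reverse of the paper's labelling), which leaves the unordered pair of side lengths, and hence the proposition, unchanged.
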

\begin{proof}
As before write $x-y=a_1v_1+a_2v_2$. For the sign choices corresponding to $k\in\{2,3\}$ the $k$-type metric ranges over exponents where one coordinate uses positive powers and the other uses negative powers. Concretely, for $k=2$ one obtains the family $A^{m_1}B^{-m_2}$, $0\le m_1,m_2\le n-1$, and the scaled coordinates become $(a_1 \lambda_A^{m_1}\lambda_B^{-m_2})\,v_1 + (a_2 \lambda_A^{-m_1}\lambda_B^{m_2})\,v_2.$
Thus
\[
\tilde\rho_{n,k}(x,y)
= \max_{0\le m_1,m_2\le n-1}\max\Big\{\,|a_1|\; \frac{|\lambda_A|^{m_1}}{|\lambda_B|^{m_2}},\;
|a_2|\; \frac{|\lambda_B|^{m_2}}{|\lambda_A|^{m_1}}\,\Big\}.
\]
For fixed $m_1$ the first factor is decreasing in $m_2$, while for fixed $m_2$ it is increasing in $m_1$; hence the maximum over the rectangle $0\le m_1,m_2\le n-1$ is attained at the corner $m_1=n-1$, $m_2=0$, giving the value $|a_1|\,|\lambda_A|^{\,n-1}$. Similarly, the second factor attains its maximum at $m_1=0$, $m_2=n-1$, giving $|a_2|\,|\lambda_B|^{\,n-1}$. Therefore
\[
\tilde\rho_{n,k}(x,y)=\max\{\,|a_1|\,|\lambda_A|^{\,n-1},\;|a_2|\,|\lambda_B|^{\,n-1}\,\}.
\]
The inequality $\tilde\rho_{n,k}(x,y)<\epsilon$ is equivalent to
\[
|a_1| < \epsilon\,|\lambda_A|^{-(n-1)},\qquad
|a_2| < \epsilon\,|\lambda_B|^{-(n-1)}.
\]
Hence a $\tilde\rho_{n,k}$-ball of radius $\epsilon$ is a parallelogram 
with side lengths 
\[
2\epsilon |\lambda_A|^{-(n-1)} 
\quad\text{and}\quad 2\epsilon|\lambda_B|^{-(n-1)}.
\]
The proof for $k=3$ is similar, where the maximum is taken over the indices $-(n-1)\leq m_1,m_2\leq 0$.
\end{proof}

\begin{theorem}\label{thm:torus}
For any $k$, the $k$-type entropy of $T$ is
\[
h_k(T) = \log|\lambda_A| + \log|\lambda_B|.
\]
\end{theorem}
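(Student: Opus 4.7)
My plan is to combine the two geometric descriptions from Propositions \ref{prop:k14} and \ref{prop:k23} with a standard volume-count argument on the torus. The critical observation is that in both cases the $\tilde\rho_{n,k}$-ball of radius $\epsilon$ is a parallelogram whose two side lengths, although distributed differently between the two exponents, multiply to give the same area
\[
4\epsilon^{2}\,|\lambda_A|^{-(n-1)}|\lambda_B|^{-(n-1)}.
\]
Since the metric $\rho$ on $\mathbb{T}^2$ is induced from $\tilde\rho$ via $\pi$, and $\pi$ is a local isometry, a small $\rho_{n,k}$-ball on $\mathbb{T}^2$ coincides with the projection of the corresponding $\tilde\rho_{n,k}$-ball in $\mathbb{R}^2$, and hence has the same Lebesgue area (with respect to the volume form induced by the basis $v_1, v_2$).

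With this in hand, I would bound $\operatorname{sep}$ and $\operatorname{span}$ from both sides. For the upper bound, I take a spanning set: tile a fundamental domain of $\mathbb{T}^2$ by translates of a parallelogram congruent to a $\tilde\rho_{n,k}$-ball of radius $\epsilon$, picking one representative per tile. The number of tiles needed is bounded by a constant (depending only on the volume of the fundamental domain and the shape of the parallelogram, not on $n$) times the reciprocal of the ball area, so
\[
\operatorname{span}(n,k,\epsilon,T) \;\leq\; C_1\,\epsilon^{-2}\,|\lambda_A|^{n-1}|\lambda_B|^{n-1}.
\]
For the lower bound, I consider a maximal $(n,k,\epsilon)$-separated set $E$; then the $\rho_{n,k}$-balls of radius $\epsilon/2$ centered at points of $E$ are pairwise disjoint, so their total area is at most $\operatorname{vol}(\mathbb{T}^2)$. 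This yields
\[
\operatorname{sep}(n,k,\epsilon,T) \;\geq\; C_2\,\epsilon^{-2}\,|\lambda_A|^{n-1}|\lambda_B|^{n-1}.
\]

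Taking logarithms, dividing by $n$, and passing to $\limsup_{n\to\infty}$ and then $\epsilon\to 0^{+}$, the constants $C_1, C_2$ and the factor $\epsilon^{-2}$ contribute $0$, while the exponential factors contribute exactly $\log|\lambda_A|+\log|\lambda_B|$. This gives the equality $h_k(T)=\log|\lambda_A|+\log|\lambda_B|$, valid uniformly in $k$ because the ball-area calculation above produced the same value in both the $k\in\{1,4\}$ and $k\in\{2,3\}$ regimes.

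The routine but delicate step will be justifying the area/covering argument on the torus rigorously: I have to argue that even though the parallelograms are very elongated for large $n$, the projection $\pi$ is still injective on each ball at the relevant scales, or else work directly on a fundamental domain with translated copies. The cleanest route is to mimic the corresponding step in the proof of Proposition~2.6.1 of \cite{brin2002introduction}, where the same issue arises for a single hyperbolic automorphism; the area comparison is what drives the argument and does not care about the shape of the parallelogram, only its area. Everything else reduces to the already-established Propositions \ref{prop:k14} and \ref{prop:k23}.
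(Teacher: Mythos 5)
Your proposal is correct and follows essentially the same route as the paper: it uses the common ball area $4\epsilon^{2}|\lambda_A|^{-(n-1)}|\lambda_B|^{-(n-1)}$ from Propositions \ref{prop:k14} and \ref{prop:k23}, a packing argument with disjoint balls for the lower bound, and a tiling/covering count with an $n$-independent constant for the upper bound. The delicate point you flag (elongated parallelograms and local injectivity of $\pi$) is handled in the paper only by appeal to the local isometry and the argument of Proposition 2.6.1 in \cite{brin2002introduction}, exactly as you propose.
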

\begin{proof}
From Proposition \ref{prop:k14} and Proposition \ref{prop:k23}, it follows that a $\tilde\rho_{n,k}$-ball of radius $\epsilon$ is a parallelogram of maximum area 
$4 \epsilon^2 |\lambda_A|^{-(n-1)}|\lambda_B|^{-(n-1)}$. 
Since the induced metric $\rho$ on $\mathbb{T}^2$ is locally isometric to $\tilde\rho$, for sufficiently small $\epsilon$, the area of a $\rho_{n,k}$-ball of radius $\epsilon$ in $\mathbb T^2$ is also at most $4 \epsilon^2 |\lambda_A|^{-(n-1)}|\lambda_B|^{-(n-1)}$.
Considering the torus with unit area, packing such disjoint balls gives $\text{cov}(n,\epsilon,T) \geq \frac{|\lambda_A|^{n-1}|\lambda_B|^{n-1}}{\epsilon^2}.$ Taking limits gives $h_k(T) \ge \log|\lambda_A| + \log|\lambda_B|$. 

For the upper bound, note that $\rho_{n,k}$-balls are parallelograms that tile the plane up to constants depending only on the eigenbasis. 
Hence the torus can be covered by at most $C|\lambda_A|^{n-1}|\lambda_B|^{n-1}/\epsilon^2$ such balls, where $C$ is a constant depending on the angle between the eigenvectors $v_1$ and $v_2$. 
Taking limits yields $h_k(T) \le \log|\lambda_A| + \log|\lambda_B|$. 
\end{proof}




\textbf{Acknowledgements.} The first author gratefully acknowledges the financial support from the Council of Scientific and Industrial Research (CSIR), Government of India, through the fellowship file no. 09/1026(0044)/2021-EMR-I.

\end{document}